\titleformat{\section}{\vskip10pt\large\bfseries}{\thesection.}{0.5em}{\centering\vspace{5pt}}
\titleformat{\subsection}{\vskip10pt\normalsize\bfseries}{\thesubsection.}{0.5em}{}
\newtheorem{theorem}{Theorem}[section]
\theoremstyle{definition}
\def\R{{\mathbb R}}
\def\d{{\mathrm d}}
\numberwithin{equation}{section}
\begin{document}

\title[]{A semi-implicit low-regularity integrator\\ for Navier--Stokes equations} 

\author[]{\,\,Buyang Li}
\address{\hspace*{-12pt}Buyang Li and Shu Ma: 
Department of Applied Mathematics, The Hong Kong Polytechnic University,
Hong Kong. {\it E-mail address}: {\tt buyang.li@polyu.edu.hk} {\rm \,and\,} {\tt maisie.ma@connect.polyu.hk}}

\author[]{\,\,Shu Ma}

\author[]{\,Katharina Schratz}
\address{\hspace*{-12pt}Katharina Schratz: 
Laboratoire Jacques-Louis Lions, Sorbonne Université,
Bureau : 16-26-315, 4 place Jussieu, Paris 5ème. {\it E-mail address}: {\tt katharina.schratz@sorbonne-universite.fr} }


\subjclass[2010]{65M12, 65M15, 76D05}


\keywords{Navier--Stokes equations, $L^2$ initial data, semi-implicit Euler scheme, finite element method, error estimate}

\maketitle

\vspace{-10pt}
\begin{abstract}
A new type of low-regularity integrator is proposed for Navier--Stokes equations, coupled with a stabilized finite element method in space. Unlike the other low-regularity integrators for nonlinear dispersive equations, which are all fully explicit in time, the proposed method is semi-implicit in time in order to preserve the energy-decay structure of NS equations. First-order convergence of the proposed method is established independent of the viscosity coefficient $\mu$, under weaker regularity conditions than other existing numerical methods, including the semi-implicit Euler method and classical exponential integrators. 
Numerical results show that the proposed method is more accurate than the semi-implicit Euler method in the viscous case $\mu=O(1)$, and more accurate than the classical exponential integrator in the inviscid case $\mu\rightarrow 0$. 
\end{abstract}



\section{\bf Introduction}\label{sec:intr}


This article is concerned with the numerical solution of the initial and boundary value problem of the incompressible Navier--Stokes (NS) equations  
\begin{equation}
\label{pde}
\left \{
\begin{aligned} 
\partial_t u + u\cdot\nabla u - \mu \varDelta u
+\nabla p &= 0 && \mbox{in}\,\,\,  \varOmega\times (0,T] ,\\
\nabla\cdot u&=0&&\mbox{in}\,\,\,  \varOmega\times (0,T] ,\\
u&=u_0 && \mbox{at}\,\,\, \varOmega\times \{0\} , 
\end{aligned}
\right .
\end{equation}
in a bounded domain $\Omega\subset\R^d$, with $d\in\{2,3\}$, under appropriate boundary conditions. 

NS equations are the fundamental partial differential equations describing the motion of incompressible viscous fluids. They are widely used in fluid dynamics to model water and blood flows, air flow around a wing, and ocean currents. 
As the exact solution is not known in most applications, the numerical solution of NS equations plays a  central role. The development of accurate, stable numerical methods, together with their rigorous error analysis, is therefore crucial and of major practical importance to reliable description of NS equations.
Driven by the  immense spectrum of applications, many different numerical methods have been proposed  for solving NS equations. 

In the smooth setting, i.e., for smooth solutions and regular initial data, the numerical approximation of NS is nowadays in large parts well understood and sharp rigorous global error estimates could be established; see, e.g., \cite{Heywood-Rannacher-1990,ingram-2013-new,layton-2014-numerical,marion-1998-navier,Nochetto-Pyo-2004,shen-1992-error-1,shen-1992-error-2}. The optimal-order error estimates generally use the viscosity term to  {control} the nonlinear term, and therefore contain a viscosity-dependent constant {$c(\mu^{-1})$} in the error bound in addition to {certain Sobolev} norms of the exact solution. Note that in case of large viscosity $\mu \sim 1$ the solution of NS is regularised such that non-smooth initial data is not a big problem numerically. 
{In particular, the rigorous error analysis of semi- and full discretisations of NS equations with $H^1$ initial data can be found in \cite{Hill-Suli-2000} and \cite{he2008,he2010}, respectively.} 
This, however, drastically changes in case of  small viscosity $\mu\ll 1$, where no smoothing can be expected and classical viscosity-dependent $c(\mu^{-1})$ error bounds explode.
Error estimates of numerical methods for NS equations without using the viscosity term to bound the nonlinear term {could recently be established for smooth solutions, see} for example in \cite{ARJR-2016,Bermejo-Saavedra-2016,Uchiumi2019}. 
The analysis in these articles show that the classical finite difference methods in time, such as the semi-implicit Euler and backward differentiation formulae, typically requires the solution to satisfy $u\in  L^\infty(0,T;H^2(\Omega)^d)$ and $\partial_{tt}u\in L^2(0,T;L^2(\Omega)^d)$ for first-order convergence in time and space (when the error constants do not depend on the viscosity), where $d$ denotes the dimension of space. The condition $\partial_{tt}u\in L^2(0,T;L^2(\Omega)^d)$ actually requires $ u\in L^2(0,T;H^4(\Omega)^d)$ for the solution of NS equations, as one time derivative of the solution is related to two spatial derivatives of the solution. As a result, the classical finite difference methods in time requires 
$$ u\in H^2(0,T;L^2(\Omega)^d)\cap L^2(0,T;H^4(\Omega)^d)\hookrightarrow L^\infty(0,T;H^3(\Omega)^d)$$ for first-order convergence in time and space. 
The analysis in the current paper further shows that the classical exponential integrators for NS equations, such as the exponential Euler method, would also require $u\in L^\infty(0,T;H^3(\Omega)^d)$ for first-order convergence in time (if we require the error constant to be independent of the viscosity). 

The objective of this article is to develop a {new} low-regularity integrator for NS which { allows for} first-order convergence in time and space under a weaker regularity condition $u\in L^\infty(0,T;W^{2,d+\epsilon}(\Omega)^d)$, where $\epsilon$ can be arbitrarily small. {In particular,  we present a stabilization technique by utilizing the nonlinear convection term in NS equations and establish global error estimates independent of $\mu$ allowing for low-regularity approximations also in regimes of small viscosity $\mu \ll1$.}

Our new scheme also greatly extends previous works on low regularity integrators which mainly focus on semi-discretizations in time \cite{K2} and nonlinear dispersive equations, e.g., Schr\"odinger, Dirac and  Korteweg-de Vries \cite{BS20,HoS16,ORS19,ORSBourg,OS18,SWZ-MC21,WuZ,WuZ2}.  In this work we approach the  NS equations, and, for the first time couple the idea of   low regularity time discretisations with  a finite element based spatial discretisation. 
Note that fully discrete low regularity  integrators were so far restricted to  pseudo spectral methods for the spatial discretisation \cite{Li-Wu-2020} which are not suitable for problems posed on general  bounded domains. The latter are, however,  especially interesting in the context of NS flow problems. 
The numerical experiments in this article show that the proposed low regularity integrator for NS equations is much more accurate than the classical semi-implicit Euler method in the viscous case $\mu=O(1)$, and more accurate and robust than the classical exponential integrator in the inviscid case $\mu\rightarrow 0$. Therefore, the proposed method combines the advantages of the semi-implicit Euler method and classical exponential integrator in both viscous and inviscid cases. 

The rest of this article is organized as follows. 
In Section \ref{sec:main_results} we construct a low-regularity integrator for NS equations through analyzing and improving both the consistency and the stability of the classical exponential Euler method. 
We first present the construction of the method in the context of periodic boundary conditions and then extend it to the widely used no-slip boundary conditions in NS flow problems. 
The energy-decay property and and error estimates of the proposed low-regularity integrator are proved for semidiscretization in time. 
In Section \ref{section:FEM} we extend the low-regularity integrator to full discretization with a stabilized finite element method in space, and present error estimates for the fully discrete low-regularity integrator. 
Numerical examples are presented in Section \ref{section:numerical} to compare the performance of the proposed low-regularity integrator with the performance of both the semi-implicit Euler method and the exponential Euler method. 
Conclusions and remarks are presented in Section \ref{sec:conclusion}.

\section{The low-regularity integrator and its basic properties}\label{sec:main_results}

In this section, we present the construction of the low-regularity integrator by analyzing the dependence of the consistency errors on the regularity of the solution. The construction is presented first for NS equations under the periodic boundary condition in subsection \ref{subsection:pbd} and then extended to the no-slip boundary condition  in subsection \ref{subsection:no-slip}.

\subsection{Construction of the time-stepping method}\label{subsection:pbd}
In this subsection, we focus on NS equations on the $d$-dimensional torus $\Omega=[0,1]^d$ (under the periodic boundary condition). Through integration by parts it is straightforward to verify the following property of the divergence-free subspace 
$$
\dot L^2 :=\{v\in L^2(\Omega)^d:\nabla\cdot v=0\} . 
$$
If $v\in \dot L^2$ and $q\in H^1$ then
$$
(v,\nabla q) = 0 . 
$$
Let $P_X:L^2(\Omega)^d\rightarrow \dot L^2$ be the $L^2$-orthogonal projection onto the divergence-free subspace. By using the above orthogonality, it is straightforward to verify that 
\begin{align} \label{P_X-formula}
P_Xf=f-\nabla q , 
\end{align}
where $q$ is the solution (up to a constant) of the following PDE problem (under periodic boundary conditions):
\begin{align*}
\Delta q=\nabla\cdot f . 
\end{align*}

Let $A=P_X\Delta: H^2\rightarrow \dot L^2$. Then NS equations can be written as 
\begin{align}\label{NS-abstract}
\left\{\begin{aligned}
\partial_tu + P_X( u\cdot\nabla u) - \mu Au &= 0 &&\mbox{for}\,\,\, t\in(0,T] ,\\
u(0) &=u_0 . 
\end{aligned}\right. 
\end{align}
From the definition of $A$ and identity \eqref{P_X-formula}, it is easy to see that for $f\in H^2(\Omega)^d$ 
\begin{align}\label{APf1}
AP_Xf=P_X\Delta P_Xf=P_X\Delta f-P_X\nabla \Delta q=P_X\Delta f 
\quad\mbox{(since $P_X\nabla \eta\equiv 0$)}  .
\end{align}
Moreover, if $f\in \dot H^2 =\{v\in H^2(\Omega)^d:\nabla\cdot v=0\}$ then $\Delta f\in \dot L^2$ and therefore $P_X\Delta f=\Delta f$. As a result, the following identity holds: 
\begin{align}\label{Af1} 
Af=\Delta f \quad\mbox{for} \,\,\, f\in \dot H^2 . 
\end{align}

Let $0=t_0<t_1<\cdots<t_N=T$ be a partition of the time interval $[0,T]$ with stepsize $\tau_n=t_{n}-t_{n-1}$. 
According to Duhamel's formula, the solution of \eqref{NS-abstract} satisfies the following identity: 
\begin{align} \label{Duhamel}
u(t_n)=e^{\tau_n \mu A}u(t_{n-1})-\int_{t_{n-1}}^{t_n} e^{(t_n-s)\mu A} P_X(u(s)\cdot \nabla u(s))\d s \quad \mbox{for}\,\,\, n\ge 1 .
\end{align}
The classical exponential integrator (for example, the exponential Euler method) approximates $u(s)$ by $u(t_{n-1})$ in \eqref{Duhamel}. 
Since 
\begin{align}\label{u(s)-u(tn-1)}
u(s) = u(t_{n-1}) + \mu \int_{t_{n-1}}^s Au(\sigma)\d\sigma - \int_{t_{n-1}}^s P_X( u(\sigma)\cdot\nabla u(\sigma))\d\sigma , 
\end{align}
substituting this identity into \eqref{Duhamel} yields that 
\begin{align} \label{Duhamel-Euler}
u(t_n)
=&\, 
e^{\tau_n \mu A}u(t_{n-1})
-\int_{t_{n-1}}^{t_n} e^{(t_n-s)\mu A} P_X(u(t_{n-1})\cdot \nabla u(t_{n-1}))\d s 
+ R_n , 
\end{align}
where the remainder $R_n$ is given by 
\begin{align*}
R_n
=&
-\int_{t_{n-1}}^{t_{n}}
e^{(t_n-s)\mu A} 
P_X\big[u(s)\cdot\nabla u(s) - u(t_{n-1})\cdot \nabla u(t_{n-1}) \big]\d s  \\
=&
-\int_{t_{n-1}}^{t_{n}}
e^{(t_n-s)\mu A} 
P_X\big[(u(s)-u(t_{n-1}))\cdot\nabla u(s) \big]\d s  \\
&
-\int_{t_{n-1}}^{t_{n}}
e^{(t_n-s)\mu A} 
P_X\big[u(t_{n-1})\cdot\nabla (u(s)-u(t_{n-1})) \big]\d s .
\end{align*}
By using the expression of $u(s)-u(t_{n-1})$ in \eqref{u(s)-u(tn-1)}, one can obtain the following estimate: 
\begin{align}\label{estimate-Rn}
\|R_n\|_{L^2} 
\lesssim
&\, \tau_n  \|u(s)-u(t_{n-1})\|_{L^\infty(0,T;L^2)} \|\nabla u\|_{L^\infty(0,T;L^{\infty})} \notag\\
&\, + \tau_n \|u\|_{L^\infty(0,T;L^\infty)} \|\nabla(u(s)-u(t_{n-1}))\|_{L^\infty(0,T;L^2)} \notag\\
\lesssim
&\, \mu \tau_n^2  \|u\|_{L^\infty(0,T;H^{2})} \|u\|_{L^\infty(0,T;W^{1,\infty})}
+ \tau_n^2 \|u\cdot\nabla u\|_{L^\infty(0,T;L^2)}  \|u\|_{L^\infty(0,T;W^{1,\infty})} \notag\\
&\, + \mu \tau_n^2 \|u\|_{L^\infty(0,T;L^\infty)} \|u\|_{L^\infty(0,T;H^{3})}
+ \tau_n^2 \|u\|_{L^\infty(0,T;L^\infty)} \|u\cdot\nabla u\|_{L^\infty(0,T;H^1)} \notag\\
\lesssim
&\, \mu \tau_n^2  \|u\|_{L^\infty(0,T;H^{2})} \|u\|_{L^\infty(0,T;H^3)}
+ \tau_n^2 \|u\|_{L^\infty(0,T;H^{2})}^2  \|u\|_{L^\infty(0,T;H^3)} . 
\end{align}
This requires $u\in L^\infty(0,T;H^3)$ in order to have first-order convergence in time (with second-order local truncation error). 

In contrast,
the idea behind the low-regularity integrator recently developed in \cite{K2}  lies in iterating Duhamel's formula  \eqref{Duhamel}, i.e., approximating  $u(s)$ by $e^{(s-t_{n-1}) \mu A}u(t_{n-1})$ in \eqref{Duhamel} and utilizing the relation 
\begin{align} \label{Duhamel2}
u(s)=e^{(s-t_{n-1}) \mu A}u(t_{n-1})-\int_{t_{n-1}}^{s} e^{(t_n-\sigma)\mu A} P_X(u(\sigma)\cdot \nabla u(\sigma))\d \sigma . 
\end{align}
We then rewrite the corresponding temporal integral by
\begin{align} \label{exp-NS-u-dxu}
&\int_{t_{n-1}}^{t_n} e^{(t_n-s)\mu A} P_X(u(s)\cdot \nabla u(s))\d s \notag \\ 
&= 
\int_{t_{n-1}}^{t_n} e^{(t_n-s)\mu A} P_X(e^{(s-t_{n-1}) \mu A}u(t_{n-1})\cdot \nabla e^{(s-t_{n-1}) \mu A}u(t_{n-1}))\d s 
+ R_{n,1} .
\end{align}
Compared with the formula \eqref{u(s)-u(tn-1)} used in the classical exponential integrator, the relation \eqref{Duhamel2} does not contain the term $Au$. As a result, the remainder $R_{n,1}$ in \eqref{exp-NS-u-dxu} satisfies the following improved estimate: 
\begin{align}\label{estimate-Rn1}
\|R_{n,1}\|_{L^2}
\lesssim
 \tau_n^2 \|u\|_{L^\infty(0,T;L^\infty)}^2 \|u\|_{L^\infty(0,T;H^2)} 
+ \tau_n^2 \|u\|_{L^\infty(0,T;L^\infty)} \|u\|_{L^\infty(0,T;W^{1,4})}^2 ,
\end{align}
which does not contain the $H^3$ norms of $u$ that appear in \eqref{estimate-Rn}. 

By substituting \eqref{exp-NS-u-dxu} into \eqref{Duhamel}, we obtain 
\begin{align} \label{Duhamel-LRI}
u(t_n)
=&\, e^{\tau_n \mu A}u(t_{n-1}) -
\int_{t_{n-1}}^{t_n} e^{(t_n-s)\mu A} P_X(e^{(s-t_{n-1}) \mu A}u(t_{n-1})\cdot \nabla e^{(s-t_{n-1}) \mu A}u(t_{n-1}))\d s 
- R_{n,1} \notag\\
=&\, e^{\tau_n \mu A}u(t_{n-1}) 
-
\int_{t_{n-1}}^{t_n} g(s) \d s 
- R_{n,1} , 
\end{align}
where 
$$
g(s) = e^{(t_n-s)\mu A} P_X [v(s)\cdot\nabla v(s)]
\quad\mbox{with}\quad v(s)=e^{(s-t_{n-1}) \mu A}u(t_{n-1}) . 
$$
Then we consider a Taylor series of the function $g(s)$ at $s=t_n$. 
Since \eqref{APf1} and \eqref{Af1} imply that $AP_Xf=P_X\Delta f=\Delta P_Xf$, by using this relation with $f=v(s)\cdot\nabla v(s)$ (in the second equality below) we have 
\begin{align}\label{dg-periodic}
g'(s) 
& = - e^{(t_n-s)\mu A} \mu A P_X [v(s)\cdot\nabla v(s)]  \notag\\
&\quad\,  
+ e^{(t_n-s)\mu A}P_X [\mu A v(s)\cdot\nabla v(s) +  v(s)\cdot\nabla \mu A v(s)]  \notag\\
& = - \mu e^{(t_n-s)\mu A}  P_X\Delta  [v(s)\cdot\nabla v(s)]  \notag\\
&\quad\,  
+ \mu e^{(t_n-s)\mu A}P_X [ \Delta v(s)\cdot\nabla v(s) +  v(s)\cdot\nabla \Delta v(s)] \notag\\
& = - \mu e^{(t_n-s)\mu A}  P_X  \big[v(s)\cdot\nabla \Delta v(s) + \Delta v(s)\cdot\nabla v(s) 
+ \mbox{$\sum_j$}\partial_j v(s)\cdot \nabla\partial_j v(s) \big]  \notag\\
&\quad\,  
+ \mu e^{(t_n-s)\mu A}P_X [ \Delta v(s)\cdot\nabla v(s) +  v(s)\cdot\nabla \Delta v(s)] \notag\\ 
& = 
- \mu e^{(t_n-s)\mu A}  P_X  \big[ \mbox{$\sum_j$}\partial_j v(s)\cdot \nabla\partial_j v(s) \big] 
 .
\end{align}
Since $g(s) = g(t_n) - \int_{s}^{t_n} g'(\sigma)\d\sigma$ and 
\begin{align*}
&\|g'(\sigma)\|_{L^2} \lesssim \mu \|\nabla v(\sigma)\|_{L^q} \|\nabla^2 v(\sigma)\|_{L^p} \quad \mbox{when}\,\,\,\, \frac1p+\frac1q=\frac12 , \\
&W^{2,p}\hookrightarrow W^{1,q} \quad \mbox{when}\,\,\,\, 1=\frac{d}{p}-\frac{d}{q}\,\,\,\mbox{and}\,\,\, 1\le  p\le q<\infty, 
\end{align*}
by choosing $1\le p\le q< \infty$ satisfying $\frac1p+\frac1q=\frac12$ and $1=\frac{d}{p}-\frac{d}{q}=\frac{2d}{p}-\frac{d}{2}$ we obtain 
\begin{align}\label{estimate-dg}
&\|g'(\sigma)\|_{L^2} \lesssim \mu \|v(\sigma)\|_{W^{2,p}}^2 
\end{align}
with
\begin{align}\label{def-p}
p = 
\left\{
\begin{aligned}
&\frac{2d}{1+d/2} = \frac{12}{5} &&\mbox{if}\,\,\, d=3 \\
&2+\epsilon  &&\mbox{if}\,\,\, d=2 , 
\end{aligned}
\right.
\end{align}
where $\epsilon>0$ can be arbitrarily small. 
Therefore, the following result holds: 
\begin{align*}
\| g(s) - g(t_n) \|_{L^2} 
\lesssim \mu \tau_n \|v\|_{L^\infty(0,T;W^{2,p})}^2 
 \lesssim \mu \tau_n \|u\|_{L^\infty(0,T;W^{2,p})}^2 . 
\end{align*}
In view of this estimate, we can rewrite \eqref{Duhamel-LRI} as 
\begin{align} \label{Duhamel-LRI-2}
u(t_n)
=&\, e^{\tau_n \mu A}u(t_{n-1}) 
-
\int_{t_{n-1}}^{t_n} g(t_n) \d s 
- R_{n,1} - R_{n,2} , 
\end{align}
with a new remainder $R_{n,2}$ which has the following bound: 
\begin{align} \label{estimate-Rn2}
\|R_{n,2}\|_{L^2} \lesssim \mu\tau_n^2 \|u\|_{L^\infty(0,T;W^{2,p})}^2 .
\end{align}
Inserting the expression of $g(t_n)$ into \eqref{Duhamel-LRI-2}, we have 
\begin{align} \label{Duhamel-LRI-3}
u(t_n)
=&\, e^{\tau_n \mu A}u(t_{n-1}) 
-
\tau_n P_X[ e^{\tau_n\mu A}u(t_{n-1}) \cdot \nabla e^{\tau_n \mu A}u(t_{n-1})  ]
- R_{n,1} - R_{n,2} .
\end{align}

Dropping the remainders $R_{n,1}$ and $R_{n,2}$ in \eqref{Duhamel-LRI-3} would yield a fully explicit scheme
\begin{align} \label{explicit-scheme}
u_n
=&\, e^{\tau_n \mu A}u_{n-1}
-
\tau_n P_X[ e^{\tau_n\mu A}u_{n-1} \cdot\nabla e^{\tau_n \mu A}u_{n-1} ] . 
\end{align}
However, in the stability estimate the gradient on the right-hand side should be bounded by the smoothing property of the semigroup $e^{\tau_n \mu A}$, and this would yield a stability estimate which depend on $\mu^{-1}$. This would not be suitable for solving NS equations when the viscosity $\mu$ is small. 

In order to construct a low-regularity integrator which is stable for small $\mu$, we further approximate $\nabla e^{\tau_n \mu A}u(t_{n-1}) $ by $\nabla u(t_{n}) $, and rewrite \eqref{Duhamel-LRI-3} into 
\begin{align} \label{Duhamel-LRI-4}
u(t_n)
=&\, e^{\tau_n \mu A}u(t_{n-1}) 
-
\tau_n P_X[ e^{\tau_n\mu A}u(t_{n-1}) \cdot \nabla u(t_{n})  ]
- R_{n,1} - R_{n,2} - P_XR_{n,3} , 
\end{align}
with
\begin{align} \label{Rn3}
R_{n,3} 
=&\, \tau_n e^{\tau_n\mu A}u(t_{n-1}) \cdot \nabla [e^{\tau_n \mu A}u(t_{n-1})  - u(t_{n}) ] \notag\\
=&\, \tau_n e^{\tau_n\mu A}u(t_{n-1}) \cdot \nabla \int_{t_{n-1}}^{t_n} e^{(t_n-s)\mu A} P_X(u(s)\cdot \nabla u(s))\d s 
\qquad\mbox{(here \eqref{Duhamel} is used)} \notag\\
=&\, \tau_n [e^{\tau_n\mu A}u(t_{n-1})]_j \cdot  \int_{t_{n-1}}^{t_n} e^{(t_n-s)\mu A} P_X(u(s)\cdot \nabla \partial_j u(s))\d s \notag\\
&\, 
+\tau_n [e^{\tau_n\mu A}u(t_{n-1})]_j \cdot  \int_{t_{n-1}}^{t_n} e^{(t_n-s)\mu A} P_X(\partial_j  u(s)\cdot \nabla u(s))\d s , 
\end{align}
where we have used \eqref{Duhamel} in deriving the second to last inequality. 
The new remainder has the following bound: 
\begin{align} \label{estimate-Rn3}
\| R_{n,3} \|_{L^2}
\lesssim &\, 
\tau_n^2 \|e^{\tau_n\mu A}u(t_{n-1})\|_{L^\infty} 
\|u\|_{L^\infty(0,T;L^\infty)} \|u\|_{L^\infty(0,T;H^2)} \notag\\
&\, + 
\tau_n^2 \|e^{\tau_n\mu A}u(t_{n-1})\|_{L^\infty} \|u\|_{L^\infty(0,T;W^{1,4})}^2 \notag\\
\lesssim &\, 
\tau_n^2\|u\|_{L^\infty(0,T;H^2)}^3 . 
\end{align}
Hence, the remainders in \eqref{Duhamel-LRI-4} are bounded by $O(\tau_n^2)$ in the $L^2$ norm, i.e.,
\begin{align} \label{Rn123}
\| R_{n,1} \|_{L^2}
+\| R_{n,2} \|_{L^2}
+\| R_{n,3} \|_{L^2}
\lesssim \tau_n^2,
\end{align}
which only requires $u\in L^\infty(0,T;W^{2,p})$, where $p$ is defined in \eqref{def-p}. 

By dropping the remainders $R_{n,1}$, $R_{n,2}$ and  $P_XR_{n,3}$ in \eqref{Duhamel-LRI-4}, we obtain the following semi-implicit scheme for NS equations: 
\begin{align} \label{semi-implicit}
u_n + \tau_n P_X[ e^{\tau_n\mu A}u_{n-1} \cdot\nabla u_n ]
=&\, e^{\tau_n \mu A}u_{n-1} . 
\end{align}

\subsection{Extension to the no-slip boundary condition}
\label{subsection:no-slip}

If $\Omega$ is a bounded domain in $\R^d$ and NS equations are considered under the no-slip boundary condition, i.e., $u=0$ on $\partial\Omega$, then the definition of $\dot L^2$ should be replaced by 
$$
\dot L^2=\{v\in L^2(\Omega)^d: \nabla\cdot v=0,\,\,\, v\cdot\nu=0\,\,\,\mbox{on}\,\,\,\partial\Omega\} ,
$$ 
where $\nu$ denotes the unit outward normal vector on the boundary $\partial\Omega$. 
The $L^2$-orthogonal projection $P_X:L^2(\Omega)^d\rightarrow \dot L^2$ is given by 
\begin{align} \label{P_X-formula_D}
P_Xf=f-\nabla q , 
\end{align}
where $q$ is the solution (up to a constant) of the following elliptic boundary value problem:
\begin{align*}
\left\{\begin{aligned}
\Delta q=&\nabla\cdot f \\
\nabla q\cdot\nu = &f\cdot\nu .
\end{aligned}\right.
\end{align*}

Let $\dot H^2=\{v\in (H^1_0\times H^2)^d:\nabla\cdot v=0\}$ and $A=P_X\Delta: \dot H^2\rightarrow \dot L^2$. Then NS equations can be written as \eqref{NS-abstract}. 
Since $P_X\nabla q=0$ for $q\in H^1(\Omega)^d$, applying $A=P_X\Delta$ to \eqref{P_X-formula_D} yields 
\begin{align}\label{APf1-D}
AP_Xf&=P_X\Delta f \quad\mbox{for}\,\,\, f\in (H^1_0\times H^2)^d ,
\end{align}
which is the same as \eqref{APf1}. 
But \eqref{Af1} should be replaced by 
\begin{align}\label{Af1-D}
Av&=P_X\Delta v = \Delta v - \nabla r \quad\mbox{for} \,\,\, v\in \dot H^2 , 
\end{align}
where 
\begin{align}\label{PDE-r}
\left\{\begin{aligned}
\Delta r=&\nabla\cdot \Delta v \\
\nabla r\cdot\nu = &\Delta v\cdot\nu .
\end{aligned}\right.
\end{align}
In a bounded Lipschitz domain it is known that the solution of \eqref{PDE-r} satisfies the basic $W^{1,p}$ estimate for some sufficiently small number $\epsilon_*>0$  (see \cite[Theorem 2]{Jerison-Kenig-1989}): 
\begin{align}\label{estimate-r}
\|r\|_{W^{1,p}}\lesssim \|v\|_{W^{2,p}} \quad\mbox{for}\,\,\, 2\le p<3+\epsilon_* . 
\end{align}

The change from \eqref{Af1} to \eqref{Af1-D} causes the change of analysis in the local truncation errors in \eqref{dg-periodic}, i.e., 
\begin{align}\label{dg-Dirichlet1}
g'(s) 
& = - e^{(t_n-s)\mu A} \mu A P_X [v(s)\cdot\nabla v(s)]  \notag\\
&\quad\,  
+ e^{(t_n-s)\mu A}P_X [\mu A v(s)\cdot\nabla v(s) +  v(s)\cdot\nabla \mu A v(s)]  \notag\\
& = - \mu e^{(t_n-s)\mu A}  P_X\Delta  [v(s)\cdot\nabla v(s)]  \notag\\
&\quad\,  
+ \mu e^{(t_n-s)\mu A}P_X [ (\Delta v(s)-\nabla r)\cdot\nabla v(s) +  v(s)\cdot\nabla (\Delta v(s)-\nabla r)] \notag\\
& = - \mu e^{(t_n-s)\mu A}  P_X  \big[v(s)\cdot\nabla \Delta v(s) + \Delta v(s)\cdot\nabla v(s) 
+ \mbox{$\sum_j$}\partial_j v(s)\cdot \nabla\partial_j v(s) \big]  \notag\\
&\quad\,  
+ \mu e^{(t_n-s)\mu A}P_X [ \Delta v(s)\cdot\nabla v(s) +  v(s)\cdot\nabla \Delta v(s) ] \notag\\
&\quad\,  
- \mu e^{(t_n-s)\mu A}P_X [ \partial_j r\partial_j v(s) +  v_j(s)\partial_j \nabla r] \notag\\
& = - \mu e^{(t_n-s)\mu A}  P_X  \big[ \mbox{$\sum_j$}\partial_j v(s)\cdot \nabla\partial_j v(s) \big]  \notag\\
&\quad\,  
- \mu e^{(t_n-s)\mu A}P_X [ \partial_j r\partial_j v(s) +  v_j(s)\partial_j \nabla r] , 
\end{align}
where some additional terms involving $\nabla^2 r$ appears, compared with \eqref{dg-periodic}. 
Since $\|\nabla^2 r\|_{L^2}$ is equivalent to $\|v\|_{H^3}$, the additional term involving $\nabla^2 r$ is not desired. Fortunately, the projection operator $P_X$ in the last term of \eqref{dg-Dirichlet1} cancels this bad term, i.e., 
\begin{align}\label{dg-Dirichlet2}
\eqref{dg-Dirichlet1} 
& = - \mu e^{(t_n-s)\mu A}  P_X  \big[ \mbox{$\sum_j$}\partial_j v(s)\cdot \nabla\partial_j v(s) \big]  \notag\\
&\quad\,  
- \mu e^{(t_n-s)\mu A}P_X [ \partial_j r\cdot\partial_j v(s) - \nabla v_j(s)\partial_j  r] \notag\\
&\quad\,
- \mu e^{(t_n-s)\mu A}P_X [ \nabla (v_j(s)\cdot\partial_j  r) ]
 .
\end{align}
Since $P_X\nabla q=0$ for all $q\in H^1(\Omega)$, it follows that the last term of \eqref{dg-Dirichlet2} is zero. This implies that 
\begin{align}\label{dg-Dirichlet}
g'(s) 
& = - \mu e^{(t_n-s)\mu A}  P_X  \big[ \mbox{$\sum_j$}\partial_j v(s)\cdot \nabla\partial_j v(s) \big]  \notag\\
&\quad\,  
- \mu e^{(t_n-s)\mu A}P_X [ \partial_j r\cdot\partial_j v(s) - \nabla v_j(s)\partial_j  r] . 
\end{align}
If $2\le p\le q< \infty$, $\frac1p+\frac1q=\frac12$ and $p<3+\epsilon_*$, then  
\begin{align*}
\|g'(s)\|_{L^2} 
\lesssim &\,
\mu \|\nabla v(s)\|_{L^q} (\|\nabla^2 v(s)\|_{L^p}+\|\nabla r\|_{L^p}) \\
\lesssim &\,
\mu \|\nabla v(s)\|_{L^q} \|\nabla^2 v(s)\|_{L^p}\quad\mbox{(here \eqref{estimate-r} is used)}.
\end{align*}
Since 
\begin{align*}
&W^{2,p}\hookrightarrow W^{1,q} \quad \mbox{when}\,\,\,\, 1=\frac{d}{p}-\frac{d}{q}\,\,\,\mbox{and}\,\,\, 1\le  p\le q<\infty, 
\end{align*}
by choosing $2\le p\le q< \infty$ satisfying $\frac1p+\frac1q=\frac12$ and $1=\frac{d}{p}-\frac{d}{q}=\frac{2d}{p}-\frac{d}{2}$ we obtain 
\begin{align}\label{estimate-dg-D}
&\|g'(s)\|_{L^2} \lesssim \mu \|v(s)\|_{W^{2,p}}^2 
\end{align}
with
$$
p = 
\left\{
\begin{aligned}
&\frac{12}{5}  &&\mbox{if}\,\,\, d=3 \\
&2+\epsilon  &&\mbox{if}\,\,\, d=2 , 
\end{aligned}
\right.
$$
where $\epsilon>0$ can be arbitrarily small. Indeed, this choice of $p$ satisfies the condition $p<3+\epsilon_*$ required in \eqref{estimate-r}. 
Since the estimate \eqref{estimate-dg-D} we obtained here is the same as \eqref{estimate-dg}, the rest analysis would be the same as the periodic boundary condition and therefore omitted. 
In the end, we would obtain \eqref{Duhamel-LRI-4} under the no-slip boundary condition, with remainders $R_{n,1}$, $R_{n,2}$ and $R_{n,3}$ satisfying the same estimates as that under periodic boundary conditions.
By dropping the remainders we would obtain the same semi-implicit scheme \eqref{semi-implicit}.

\subsection{The energy-decay property}
The proposed semi-implicit low-regularity integrator in \eqref{semi-implicit} preserves the energy-decay structure of NS equations. 
This can be seen by testing \eqref{semi-implicit} with $u_n$. Then we have 
\begin{align} \label{energy-decay-1}
\|u_n\|_{L^2}^2 + \tau_n (e^{\tau_n\mu A}u_{n-1} \cdot\nabla u_n , u_n)
=&\, (e^{\tau_n \mu A}u_{n-1} , u_n) . 
\end{align}
Since $e^{\tau_n\mu A}u_{n-1}$ is divergence-free (the same as $u_{n-1}$), it follows from integration by parts that 
$$
(e^{\tau_n\mu A}u_{n-1} \cdot\nabla u_n , u_n) 
=(e^{\tau_n\mu A}u_{n-1},\nabla \frac12|u_n|^2 ) 
=-(\nabla\cdot (e^{\tau_n\mu A}u_{n-1}), \frac12|u_n|^2 ) 
= 0 .
$$
As a result, \eqref{energy-decay-1} reduces to 
\begin{align*} 
\|u_n\|_{L^2}^2 
= (e^{\tau_n \mu A}u_{n-1} , u_n) 
\le \|e^{\tau_n \mu A}u_{n-1}\|_{L^2}\|u_n\|_{L^2}
\le \|u_{n-1}\|_{L^2}\|u_n\|_{L^2} ,
\end{align*}
which implies that 
\begin{align} \label{energy-decay-2}
\|u_n\|_{L^2} 
\le&\, \|u_{n-1}\|_{L^2} . 
\end{align}
On the one hand, the energy-decay structure of the semi-implicit low-regularity integrator guarantees the energy boundedness of the numerical solution without requiring any regularity of the solution and initial data. On the other hand, this energy-decay structure also plays an important role in guaranteeing the convergence of numerical solutions when the solution has sufficient regularity, as reflected by the error analysis below.

\subsection{Error estimates}

\begin{theorem}\label{THM:error-time}
Consider the NS equations either in a torus $\Omega=[0,1]^d$ with periodic boundary condition or in a bounded domain $\Omega$ under the Dirichlet boundary condition, and assume that the solution of the NS equations has the following regularity: 
\begin{align}\label{reg-u}
u\in C([0,T];L^2(\Omega)^d)\cap L^\infty(0,T;W^{1,\infty}(\Omega)^d)\cap L^\infty(0,T;W^{2,p}(\Omega)^d) ,
\end{align} 
where $p$ is given by \eqref{def-p}. 
Then the numerical solution by the semi-implicit method \eqref{semi-implicit} has the following error bound:
\begin{align} 
\max_{1\le n\le N} \|e_n\|_{L^2} 
\lesssim \tau .
\end{align}
\end{theorem}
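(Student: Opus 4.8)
The plan is a discrete stability (energy) argument built on the consistency relation \eqref{Duhamel-LRI-4}, exploiting the same skew-symmetry that produced the energy-decay estimate \eqref{energy-decay-2}. First I would record two structural facts. The Stokes semigroup $e^{\tau\mu A}$ and the Leray projection $P_X$ both map $\dot L^2$ into itself, so every iterate of \eqref{semi-implicit} lies in $\dot L^2$; moreover the linear system \eqref{semi-implicit} determining $u_n$ from $u_{n-1}$ is uniquely solvable, because the map $v\mapsto\tau_nP_X[e^{\tau_n\mu A}u_{n-1}\cdot\nabla v]$ is skew-symmetric on $\dot L^2$ and hence $I$ plus this map is invertible. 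Consequently the error $e_n:=u(t_n)-u_n$ belongs to $\dot L^2$ for every $n$, and $e_0=0$.

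Next I would subtract the scheme \eqref{semi-implicit} from the exact identity \eqref{Duhamel-LRI-4}. Splitting the difference of the two convection terms as
\begin{align*}
&e^{\tau_n\mu A}u(t_{n-1})\cdot\nabla u(t_n)-e^{\tau_n\mu A}u_{n-1}\cdot\nabla u_n\\
&\quad=\big(e^{\tau_n\mu A}e_{n-1}\big)\cdot\nabla u(t_n)+\big(e^{\tau_n\mu A}u_{n-1}\big)\cdot\nabla e_n
\end{align*}
gives the error equation
\begin{align*}
e_n+\tau_nP_X\big[(e^{\tau_n\mu A}u_{n-1})\cdot\nabla e_n\big]
&=e^{\tau_n\mu A}e_{n-1}-\tau_nP_X\big[(e^{\tau_n\mu A}e_{n-1})\cdot\nabla u(t_n)\big]\\
&\quad-R_{n,1}-R_{n,2}-P_XR_{n,3}.
\end{align*}
I would then test this identity with $e_n$. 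Since $P_X$ is self-adjoint and $P_Xe_n=e_n$, the term $\tau_n\big(P_X[(e^{\tau_n\mu A}u_{n-1})\cdot\nabla e_n],e_n\big)$ equals $\tau_n\big((e^{\tau_n\mu A}u_{n-1})\cdot\nabla e_n,e_n\big)$, which vanishes after integration by parts exactly as in the derivation of \eqref{energy-decay-2}, because $e^{\tau_n\mu A}u_{n-1}$ is divergence-free (and, in the no-slip case, tangential to $\partial\Omega$, so the boundary integral is zero). Hence $\|e_n\|_{L^2}^2$ equals the $L^2$ pairing of $e_n$ with the right-hand side.

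It remains to estimate that pairing. The linear term obeys $(e^{\tau_n\mu A}e_{n-1},e_n)\le\|e_{n-1}\|_{L^2}\|e_n\|_{L^2}$ since $e^{\tau\mu A}$ is an $L^2$-contraction uniformly for $\mu\ge0$. For the second term, using the contraction of $P_X$ on $L^2$ and the assumption $u\in L^\infty(0,T;W^{1,\infty})$,
\begin{align*}
\tau_n\big|\big(P_X[(e^{\tau_n\mu A}e_{n-1})\cdot\nabla u(t_n)],e_n\big)\big|
\le C\tau_n\|e_{n-1}\|_{L^2}\|e_n\|_{L^2},
\end{align*}
with $C$ depending only on $\|u\|_{L^\infty(0,T;W^{1,\infty})}$. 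Finally, by \eqref{Rn123} together with $\|P_XR_{n,3}\|_{L^2}\le\|R_{n,3}\|_{L^2}$, the remainder contributes at most $C\tau_n^2\|e_n\|_{L^2}$, the constant depending only on the norms appearing in \eqref{reg-u}. Dividing by $\|e_n\|_{L^2}$ (the bound being trivial when $e_n=0$) gives
\begin{align*}
\|e_n\|_{L^2}\le(1+C\tau_n)\|e_{n-1}\|_{L^2}+C\tau_n^2 ,
\end{align*}
and the discrete Gronwall inequality with $e_0=0$ yields $\|e_n\|_{L^2}\le e^{CT}C\sum_{k=1}^n\tau_k^2\le e^{CT}CT\tau$ with $\tau=\max_{1\le n\le N}\tau_n$, i.e. $\max_{1\le n\le N}\|e_n\|_{L^2}\lesssim\tau$.

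I do not expect a genuine obstacle in this last step: the substantial part of the work --- producing the $\mu$-independent, $O(\tau_n^2)$ remainder bounds \eqref{Rn123} under only the regularity \eqref{reg-u} --- was already carried out in Sections \ref{subsection:pbd} and \ref{subsection:no-slip}. The two points that need care are the exact cancellation of the implicit convection term (which relies on $e_n\in\dot L^2$, the self-adjointness of $P_X$, and, for no-slip data, on $e^{\tau_n\mu A}u_{n-1}$ being tangential to $\partial\Omega$), and the verification that no constant hides a negative power of $\mu$: this holds because the Stokes semigroup is a contraction uniformly in $\mu\ge0$, the $W^{1,\infty}$ bound on $u$ is $\mu$-free, and the remainder $R_{n,2}$ even carries a favourable factor $\mu$.
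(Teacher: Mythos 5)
Your argument is correct and is essentially the paper's own proof: the same error equation obtained by subtracting \eqref{semi-implicit} from \eqref{Duhamel-LRI-4}, the same testing with $e_n$ using the skew-symmetry of the implicit convection term (as in the energy-decay estimate \eqref{energy-decay-2}), the same use of the $L^2$-contraction of $e^{\tau_n\mu A}$, the $W^{1,\infty}$ bound on $u$, and the consistency bounds \eqref{Rn123}, followed by a discrete Gronwall argument. The only (harmless) deviations are cosmetic: you divide by $\|e_n\|_{L^2}$ instead of squaring and absorbing terms (thereby avoiding the paper's ``sufficiently small $\tau_n$'' proviso), and you spell out details the paper leaves implicit, such as unique solvability of the scheme, $e_n\in\dot L^2$, and the $\mu$-uniformity of all constants.
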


\begin{proof}
If the solution has regularity \eqref{reg-u} for some $p>d$, then $p$ is bigger than the value defined in \eqref{def-p} and therefore the regularity required in Section \ref{sec:main_results} is satisfied. 

Let $e_n=u_n -u(t_n)$ be the error function. The difference between \eqref{semi-implicit} and \eqref{Duhamel-LRI-4} yields the following error equation:  
\begin{align} \label{error_eq}
e_n + \tau_n P_X[ e^{\tau_n\mu A}u_{n-1} \cdot\nabla e_n ]
=&\, e^{\tau_n \mu A}e_{n-1} 
- \tau_n P_X[ e^{\tau_n\mu A}e_{n-1} \cdot\nabla u(t_n) ] \notag\\
&\,
+R_{n,1}+R_{n,2}+P_XR_{n,3}. 
\end{align}
Testing \eqref{error_eq} by $e_n$ and using the consistency error estimates in \eqref{Rn123}, we obtain 
\begin{align*} 
\|e_n\|_{L^2}^2 
=&\, 
(e^{\tau_n \mu A}e_{n-1} , e_n) 
- (\tau_n e^{\tau_n\mu A}e_{n-1} \cdot\nabla u(t_n) , e_n) 
+ (R_{n,1}+R_{n,2}+P_XR_{n,3} , e_n) \\
\le&\, 
\frac12\|e_{n-1}\|_{L^2}^2+\frac12\|e_{n}\|_{L^2}^2 
+C\tau_n\|\nabla u(t_n)\|_{L^\infty}  \|e_{n-1}\|_{L^2}\|e_{n}\|_{L^2} 
+C\tau_n^2\|e_{n}\|_{L^2} \\
\le&\, 
\frac12\|e_{n-1}\|_{L^2}^2+\frac12\|e_{n}\|_{L^2}^2 
+C\tau_n \|e_{n-1}\|_{L^2}^2 
+C\tau_n \|e_{n}\|_{L^2}^2+C\tau_n^3 .
\end{align*}
The second and fourth terms on the right-hand side can be absorbed by the left-hand side. 
Therefore, we have 
\begin{align*} 
(1-C\tau_n)\|e_n\|_{L^2}^2 
\le&\, 
(1+C\tau_n)\|e_{n-1}\|_{L^2}^2  
+C\tau_n^3 .
\end{align*}
For sufficiently small stepsize $\tau_n$ we can apply Gronwall's inequality. This yields 
\begin{align*} 
\max_{1\le n\le N} \|e_n\|_{L^2}^2 
\lesssim \tau^2 .
\end{align*}
This proves the desired error bound in Theorem \ref{THM:error-time}. 
\end{proof}

\section{Extension to fully discrete finite element methods}
\label{section:FEM}


In this section, we extend the low-regularity integrator to full discretization by using a finite element method with postprocessing at every time level. For simplicity we focus on the periodic boundary condition. 

We consider a conforming finite element subspace $X_h\times M_h\subset H^1(\Omega)^d\times L^2(\Omega)$ with the following two properties:
\begin{enumerate}
\item
The inf-sup condition:
$$
\|q^h\|_{L^2} \lesssim \sup_{\begin{subarray}{c}v^h\in X_h\\
v^h\neq 0\end{subarray}} \frac{(\nabla \cdot v^h,q^h)}{\|v^h\|_{H^1}} .
$$

\item
Approximation properties:
\begin{align*}
&\inf_{v^h\in X_h} (\|v-v^h\|_{L^2} + h \|v-v^h\|_{H^1}) \lesssim h^k\|v\|_{H^k} 
&&\mbox{for} \,\,\, v\in H^k(\Omega)^d\,\,\,\mbox{and} \,\,\, 1\le k\le 2 ,\\
&\inf_{q^h\in M_h} \|q-q^h\|_{L^2} \lesssim h^k\|q\|_{H^k} 
&&\mbox{for} \,\,\, q\in H^k(\Omega)\,\,\,\mbox{and} \,\,\, 0\le k\le 1 . 
\end{align*}

\end{enumerate}
Examples of such finite element spaces include the Taylor--Hood P$^k$-P$^{k-1}$ spaces (for $k\ge 2$) and the mini-element P$^{\rm 1b}$-P$^1$ space; see \cite{Arnold-Brezzi-Fortin-1984,BrennerScott2008,Brezzi-Falk-1991}. 

We define the discrete divergence-free subspace of $X_h$ by  
$$\dot X_h=\{v^h \in X_h: (\nabla\cdot v^h,q^h)=0\,\,\mbox{for all}\,\,\,q^h\in M_h\} ,$$ 
and then define the discrete Stokes operator $A_h:\dot X_h\rightarrow \dot X_h$ by 
$$
(A_hw^h,v^h)=-(\nabla w^h,\nabla v^h) \quad\forall\, w^h,v^h\in \dot X_h.
$$

We define $U_h$ to be the $H(\text{div}, \Omega)$-conforming Raviart--Thomas 
finite element spaces of order $1$, i.e., 
$$U_h := \{w \in H(\text{div}, \Omega): w|_{K} \in P_{1}(K)^{d} + xP_{1}(K)\,\,\,\mbox{for every triangle}\,\,\, K \} ,
$$ 
and define the divergence-free subspace of $U_h$ by
\begin{align}
&\dot U_h 
:=\{v_h\in U_h :
\mbox{$\nabla\cdot v_h = 0$ in $\Omega$}\} .
\label{FEM_space_post_processed_velocity}
\end{align}
Let $P_{\dot U_h}:L^2(\Omega)^d \rightarrow \dot U_h $ be the $L^{2}$-orthogonal projection, defined by 
\begin{align}\label{L2-Proj-RT}
\,\,\,(v-P_{\dot U_h}v,w_h)=0
\quad\forall\, w_h\in \dot U_h,\,\,\,\forall\,v\in L^2(\Omega)^d . 
\end{align}
If $v\in H^2(\Omega)^d$ is a divergence-free vector field then the following approximation result holds (see \cite[inequality (3.4)]{Li-Qiu-arXiv})
\begin{align}\label{L2-error-Proj-RT}
\|v-P_{\dot U_h}v\|_{L^2}\le Ch^2\|v\|_{H^2} . 
\end{align}

Note that the weak formulation of the time-stepping method in \eqref{semi-implicit} can be written as 
\begin{align} 
&(u_n,v) + ( \tau_n  e^{\tau_n\mu A}u_{n-1} \cdot\nabla u_n ,v)
+(p_n,\nabla \cdot v)  =(e^{\tau_n \mu A}u_{n-1} ,v) &&\forall\, v\in H^1(\Omega)^d , \label{semi-implicit-weak1}\\[5pt] 
&(\nabla\cdot u_n,q)= 0  &&\forall\, q\in L^2(\Omega) , \label{semi-implicit-weak2}
\end{align}
where $p_n$ is the function satisfying 
$$\tau_n P_X[e^{\tau_n\mu A}u_{n-1} \cdot\nabla u_n]=\tau_n   e^{\tau_n\mu A}u_{n-1} \cdot\nabla u_n-\nabla p_n . $$
By using the discrete Stokes operator $A_h$ and the projection operator $P_{\dot U_h}$ introduced in this section, we consider the following fully discrete finite element method for \eqref{semi-implicit-weak1}--\eqref{semi-implicit-weak2}: Find $(u_n^h,p_n^h)\in X_h\times M_h$ such that the following equations hold: 
\begin{align} 
&(u_n^h,v^h) 
+ \tau_n  ( [{ P_{\dot U_h} }e^{\tau_n\mu A_h}u_{n-1}^h] \cdot\nabla u_n^h ,v^h) 
+(p_n^h,\nabla \cdot v^h)
=(e^{\tau_n \mu A_h}u_{n-1}^h ,v^h) &&\forall\, v^h\in X_h , \label{FEM-weak1}\\
&(\nabla\cdot u_n^h,q^h)= 0  &&\forall\, q^h\in M_h . \label{FEM-weak2}
\end{align}

The presence of the postprocessing projection $P_{\dot U_h} $ is necessary for obtaining error estimates as well as preserving the energy-decay structure. 
In particular, since $\tau_n P_{\dot U_h} e^{\tau_n\mu A_h}u_{n-1}^h$ is divergence-free (due to the projection $P_{\dot U_h} $), it follows that 
$$
\tau_n ( [P_{\dot U_h} e^{\tau_n\mu A_h}u_{n-1}^h] \cdot\nabla u_n^h , u_n^h) = 0 .
$$
As a result, choosing $(v^h,q^h)=(u_n^h,p_n^h)$ in \eqref{FEM-weak1}--\eqref{FEM-weak2} yields 
\begin{align} 
&\|u_n^h\|_{L^2}^2 
=(e^{\tau_n \mu A_h}u_{n-1}^h ,u_n^h) 
\le
\|e^{\tau_n \mu A_h}u_{n-1}^h\|_{L^2}\|u_n^h\|_{L^2} 
\le
\|u_{n-1}^h\|_{L^2}\|u_n^h\|_{L^2} ,
\end{align}
which implies the following energy-decay inequality: 
\begin{align} 
&\|u_n^h\|_{L^2} 
\le
\|u_{n-1}^h\|_{L^2} . 
\end{align}

\begin{theorem}\label{THM:FD}
Consider the NS equations either in a torus $\Omega=[0,1]^d$ (with periodic boundary condition) and assume that the solution of the NS problem \eqref{pde} has the following regularity: 
\begin{align}\label{reg-u-fd}
u\in C([0,T];L^2(\Omega)^d)\cap L^\infty(0,T;W^{1,\infty}(\Omega)^d)\cap L^\infty(0,T;W^{2,p}(\Omega)^d) , 
\end{align} 
where $p$ is given by \eqref{def-p}. 
Then, under mesh size restriction $h\lesssim \tau_{\min}$ {\rm(}the smallest stepsize{\rm)}, the numerical solution given by the fully discrete method \eqref{FEM-weak1}--\eqref{FEM-weak2} has the following error bound:
\begin{align} 
\max_{1\le n\le N} \|u_n^h-u(t_n)\|_{L^2} 
\lesssim \tau .
\end{align}
\end{theorem}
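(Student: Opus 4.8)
The plan is to combine the time-discretization error analysis of Theorem~\ref{THM:error-time} with a spatial finite element error analysis, using the energy-decay structure of the fully discrete scheme at each stage to keep the constants independent of $\mu$. I would first set up a reference semidiscrete-in-time object to compare against: either the time-discrete solution $u_n$ from \eqref{semi-implicit} (so that the total error splits as $u_n^h-u(t_n) = (u_n^h-u_n)+(u_n-u(t_n))$, with the second piece controlled by Theorem~\ref{THM:error-time}), or — more robustly — directly compare $u_n^h$ with a suitable projection of the exact solution $u(t_n)$ and absorb the temporal consistency errors $R_{n,1}$, $R_{n,2}$, $R_{n,3}$ (bounded by $O(\tau_n^2)$ in $L^2$ via \eqref{Rn123}) together with the spatial projection errors. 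I would take the second route, writing the error equation obtained by subtracting \eqref{FEM-weak1}--\eqref{FEM-weak2} from the projected version of the exact identity \eqref{Duhamel-LRI-4}, with $e_n^h := u_n^h - R_h u(t_n)$ where $R_h$ is an appropriate (discretely divergence-free, e.g.\ Stokes- or Ritz-type) projection into $\dot X_h$, so that test functions $v^h\in\dot X_h$ annihilate the pressure terms.

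The key steps, in order, would be: (i) establish the approximation properties of $R_h$ and of the Raviart--Thomas projection $P_{\dot U_h}$ — the latter already given in \eqref{L2-error-Proj-RT} — together with the crucial fact that $P_{\dot U_h}e^{\tau_n\mu A_h}u_{n-1}^h$ is exactly divergence-free, which makes the discrete convection term skew-symmetric and hence vanishing when tested against $u_n^h$ (this is already used above for energy decay and is what removes any $\mu^{-1}$ dependence); (ii) bound the error committed by replacing the continuous semigroup $e^{\tau_n\mu A}$ and continuous Leray projection $P_X$ by their discrete counterparts $e^{\tau_n\mu A_h}$ and $P_{\dot U_h}$ acting on $u(t_{n-1})$ — here one needs a uniform-in-$\mu$ estimate of the form $\|(e^{\tau_n\mu A}-e^{\tau_n\mu A_h}\Pi_h)w\|_{L^2}\lesssim h^2\|w\|_{H^2}$ for divergence-free $w$, exploiting that $e^{\tau\mu A}$ is a contraction so that the semigroup error does not accumulate catastrophically; (iii) test the error equation with $e_n^h$, use the skew-symmetry to kill the dangerous $e_n^h\cdot\nabla e_n^h$-type term, use $\|\nabla u(t_n)\|_{L^\infty}$ (finite by \eqref{reg-u-fd}) and the $L^\infty$-boundedness of $e^{\tau_n\mu A_h}u_{n-1}^h$ (from energy decay plus an inverse inequality, which is where $h\lesssim\tau_{\min}$ enters) to bound the remaining convection contributions by $C\tau_n\|e_{n-1}^h\|_{L^2}^2 + C\tau_n\|e_n^h\|_{L^2}^2$, and absorb consistency plus projection terms as $C\tau_n(\tau^2+h^2)^2/\tau_n = C\tau_n(\tau^4+h^4)\cdot\tau_n^{-1}$-type expressions — more precisely the squared consistency bound $\tau_n^4$ and squared spatial error $h^4$ divided appropriately; (iv) apply the discrete Gronwall inequality to conclude $\max_n\|e_n^h\|_{L^2}^2\lesssim \tau^2 + h^4/\tau_{\min}^2 \lesssim \tau^2$ under $h\lesssim\tau_{\min}$, and finish by the triangle inequality with the projection error $\|u(t_n)-R_hu(t_n)\|_{L^2}\lesssim h^2\|u(t_n)\|_{H^2}\lesssim h^2 \lesssim \tau^2$.

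The main obstacle I anticipate is step (ii): controlling the difference between the continuous evolution $e^{\tau_n\mu A}u(t_{n-1})$ and the fully discrete evolution $P_{\dot U_h}e^{\tau_n\mu A_h}u_{n-1}^h$ \emph{uniformly in $\mu$}, because the standard parabolic finite-element semigroup error estimates carry constants that blow up as $\mu\to0$ (they rely on smoothing). The resolution should be that in the error equation the semigroup only ever acts on data for a single step and is a contraction in $L^2$, so one needs the weaker statement that $e^{\tau\mu A_h}$ approximates $e^{\tau\mu A}$ in the sense of initial data of $H^2$-regularity with an $O(h^2)$, $\mu$-independent bound; this can be extracted from the elliptic error estimate for $A_h$ versus $A$ combined with the uniform boundedness of the semigroups, together with the fact that $\mu\tau_n Au$ is itself controlled by the equation. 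A secondary subtlety is handling the term $\tau_n P_{\dot U_h}e^{\tau_n\mu A_h}e_{n-1}^h\cdot\nabla u(t_n)$: one must first verify that $P_{\dot U_h}$ applied to a discretely-divergence-free velocity times $\tau_n$ remains small in a norm strong enough to multiply $\nabla u(t_n)\in L^\infty$, which follows from the $L^2$ stability of $P_{\dot U_h}$ and the contraction property of $e^{\tau_n\mu A_h}$, so this term contributes only $C\tau_n\|e_{n-1}^h\|_{L^2}\|e_n^h\|_{L^2}$ as needed for Gronwall. Everything else is a routine repetition of the semidiscrete argument in the proof of Theorem~\ref{THM:error-time}.
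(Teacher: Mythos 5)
Your proposal is correct and follows essentially the same route as the paper's proof: an error equation for $e_n^h=u_n^h-P_{\dot X_h}u(t_n)$ in $\dot X_h$, skew-symmetry of the convection term thanks to the exactly divergence-free postprocessed velocity $P_{\dot U_h}e^{\tau_n\mu A_h}u_{n-1}^h$, the temporal remainders bounded by \eqref{Rn123}, the semigroup/projection discretization errors bounded by $O(\tau_n h+h^2)$ via \eqref{L2-error-Proj-RT} and the standard ($\mu$-uniform after time rescaling) parabolic FEM estimate, and a Gronwall iteration in which $h\lesssim\tau_{\min}$ absorbs the $h^2$ term as $h^4/\tau_n\lesssim\tau_n^3$. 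The only inessential deviation is your suggestion that the mesh condition is also needed for an inverse-inequality $L^\infty$ bound of the discrete velocity: no such bound is used in the paper (skew-symmetry plus $\|\nabla u(t_n)\|_{L^\infty}$ handle all convection terms), and the mesh condition enters only through the $h^4/\tau_n$ absorption, which you also identify.
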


\begin{proof}
By requiring the test function $v^h$to be in the discrete divergence-free subspace $\dot X_h$, 
the weak formulation \eqref{FEM-weak1}--\eqref{FEM-weak2} can be equivalently written as: Find $u_n^h\in \dot X_h$ such that 
%
%
\begin{align} 
&(u_n^h,v^h) 
+ \tau_n ( P_{\dot U_h}  [e^{\tau_n\mu A_h}u_{n-1}^h] \cdot\nabla u_n^h,v^h)
=(e^{\tau_n \mu A_h}u_{n-1}^h ,v^h) 
\quad \forall\, v^h\in \dot X_h . \label{FEM-div0}
\end{align}
The exact solution satisfies similar equations, i.e., 
\begin{align} 
&(P_{\dot X_h} u(t_n) , v^h) 
+ \tau_n (P_{\dot U_h} [e^{\tau_n\mu A_h}P_{\dot X_h} u(t_{n-1})] \cdot \nabla P_{\dot X_h} u(t_{n}) , v^h)  \notag\\
&= (e^{\tau_n \mu A_h}P_{\dot X_h} u(t_{n-1})  , v^h) 
- (R_{n,1} + R_{n,2} + P_XR_{n,3} ,v^h) \notag\\
&\quad\, 
-  (E_{n,1} + E_{n,2} + E_{n,3}, v^h)  &&\forall\, v^h\in \dot X_h , 
 \label{PDE-div0-n1} 
\end{align} 
where
\begin{align} 
E_{n,1} 
=&\, \tau_n \big[ e^{\tau_n\mu A}u(t_{n-1})-P_{\dot U_h} e^{\tau_n\mu A_h}P_{\dot X_h} u(t_{n-1}) \big] \cdot \nabla u(t_{n}) , \\
E_{n,2} 
=&\, \tau_n P_{\dot U_h} \big[ e^{\tau_n\mu A_h}P_{\dot X_h} u(t_{n-1})\big]  \cdot \nabla (u(t_{n})-P_{\dot X_h}u(t_{n})) , \\
E_{n,3} 
=&\, e^{\tau_n \mu A_h}P_{\dot X_h} u(t_{n-1}) - P_{\dot X_h} e^{\tau_n \mu A}u(t_{n-1}) .
\end{align} 

By using the triangle inequality we can decompose $E_{n,1}$ into two parts, i.e.,
\begin{align} \label{Estimate-En1}
\|E_{n,1}\|_{L^2}  
\le &\, \tau_n 
\big\| \big[e^{\tau_n\mu A}u(t_{n-1})-P_{\dot U_h} e^{\tau_n\mu A}u(t_{n-1}) \big]\cdot \nabla u(t_{n}) \big\|_{L^2} \\
&\,+ \tau_n\big\| P_{\dot U_h} \big[e^{\tau_n\mu A}u(t_{n-1})-e^{\tau_n\mu A_h}P_{\dot X_h} u(t_{n-1})\big]\cdot \nabla u(t_{n}) \big\|_{L^2} \notag \\
\le&\,
\tau_nh^2\|e^{\tau_n\mu A}u(t_{n-1})\|_{H^2} \|\nabla u(t_n)\|_{L^\infty}
+\tau_nh^2\|u(t_{n-1})\|_{H^2} \|\nabla u(t_n)\|_{L^\infty}, 
\end{align} 
where the first term on the right-hand side of \eqref{Estimate-En1} is estimated by using \eqref{L2-error-Proj-RT}, and the second term is estimated by using the standard $L^2$ error estimates of semidiscrete FEM for a linear parabolic equation with initial value $u(t_{n-1})$; see \cite[Theorem 3.1]{Thomee2006} (for the time-dependent Stokes equations the error estimation is the same).
The standard approximation property of the $L^2$ projection operator $P_{\dot X_h}$ implies that 
\begin{align} 
\|E_{n,2}\|_{L^2} 
&\lesssim \tau_n h \|u\|_{L^\infty(0,T;H^2)}^2 .
\end{align}
Again, the standard $L^2$ error estimates of semidiscrete FEM for linear parabolic equations with initial value $u(t_{n-1})$ in \cite[Theorem 3.1]{Thomee2006} implies that 
\begin{align} 
\|E_{n,3}\|_{L^2} 
&\lesssim h^2 \|u(t_{n-1})\|_{H^2} . 
\label{En3}
\end{align}
The three estimates above can be summarized as 
\begin{align} 
\|E_{n,1}\|_{L^2} +\|E_{n,2}\|_{L^2} +\|E_{n,3}\|_{L^2} 
&\lesssim \tau_nh + h^2 . 
\label{En123}
\end{align}
%


Let $e_n^h=u_n^h-P_{\dot X_h}u(t_n)$. 
Then the difference between \eqref{FEM-div0} and \eqref{PDE-div0-n1} yields the following error equation: 
\begin{align} 
&(e_n^h,v^h)  
+ \tau_n  ( P_{\dot U_h} \big[e^{\tau_n\mu A_h}u_{n-1}^h\big] \cdot\nabla e_n^h,v^h)
+ \tau_n  (P_{\dot U_h}\big[e^{\tau_n\mu A_h}e_{n-1}^h\big] \cdot\nabla P_{\dot X_h} u(t_n),v^h) 
\label{Error_Eq} \\
&
=(e^{\tau_n \mu A_h}e_{n-1}^h ,v^h) 
- (R_{n,1} + R_{n,2} + P_XR_{n,3} ,v^h) 
- (E_{n,1} + E_{n,2} + E_{n,3}, v^h)  
\quad \forall\, v^h\in \dot X_h . \notag 
\end{align}
By choosing $v^h=e_n^h$ in \eqref{Error_Eq}  and using the property (thanks to the projection $P_{\dot U_h}$ onto the divergence-free space $\dot U_h$) 
$$
( P_{\dot U_h} \big[e^{\tau_n\mu A_h}u_{n-1}^h\big] \cdot\nabla e_n^h,e_n^h) = 0 , 
$$
we obtain 
\begin{align} 
&
\|e_n^h\|_{L^2}^2  
+ \tau_n ( P_{\dot U_h}\big[e^{\tau_n\mu A_h}e_{n-1}^h\big] \cdot\nabla P_{\dot X_h} u(t_n),e_n^h) \notag\\
&
=(e^{\tau_n \mu A_h}e_{n-1}^h , e_n^h ) 
- (R_{n,1} + R_{n,2} + P_XR_{n,3} , e_n^h) 
- (E_{n,1} + E_{n,2} + E_{n,3}, e_n^h) . 
\end{align}
The rigth-hand side of the above inequality can be estimated by using the consistency error estimates in \eqref{Rn123} and \eqref{En123}. This yields 
\begin{align} 
&
\|e_n^h\|_{L^2}^2  
+ \tau_n ( P_{\dot U_h}\big[e^{\tau_n\mu A_h}e_{n-1}^h\big] \cdot\nabla P_{\dot X_h} u(t_n),e_n^h) \notag\\
&\le 
\frac12\|e_{n-1}^h\|_{L^2}^2+\frac12\|e_{n}^h\|_{L^2}^2 
+C\tau_n(\tau_n+h)\|e_{n}^h\|_{L^2} 
+Ch^2  \|e_{n}^h\|_{L^2} \notag\\
&\le 
\frac12\|e_{n-1}^h\|_{L^2}^2+\frac{1+\tau_n}{2}\|e_{n}^h\|_{L^2}^2 
+C[\tau_n(\tau_n^2+h^2) + h^4/\tau_n] \notag\\
&\le 
\frac12\|e_{n-1}^h\|_{L^2}^2+\frac{1+\tau_n}{2}\|e_{n}^h\|_{L^2}^2 
+C\tau_n^3 \qquad\mbox{when}\,\,\, h\lesssim \tau_n.
\end{align} 
The second term on the left-hand side of the above inequality can be estimated by 
\begin{align} 
|\tau_n ( P_{\dot U_h}\big[e^{\tau_n\mu A_h}e_{n-1}^h\big] \cdot\nabla P_{\dot X_h} u(t_n),e_n^h)| 
&\lesssim  
\tau_n \|e_{n-1}^h\|_{L^2} \|e_{n}^h\|_{L^2} .
\end{align} 
By combining the two inequalities above, we obtain 
\begin{align} 
(1-\tau_n) \|e_n^h\|_{L^2}^2  
&\le 
(1+C\tau_n)\|e_{n-1}^h\|_{L^2}^2  
+C\tau_n^3 \qquad\mbox{when}\,\,\, h\lesssim \tau_n.
\end{align} 
Then, iterating the inequality for $n=1,2,\dots,N$, we obtain the following error bound: 
\begin{align} 
\max_{1\le n\le N} \|e_n^h\|_{L^2}^2
&\lesssim 
\tau^2 \qquad\mbox{when}\,\,\, h\lesssim \tau_{\min} . 
\end{align} 
This completes the proof of Theorem \ref{THM:FD}.
\hfill \end{proof}

\section{Numerical experiments}\label{section:numerical}

%
%

In this section, we present numerical tests to support the theoretical analysis and to illustrate the advantages of the proposed method in comparison with the semi-implicit Euler method and classical exponential integrator (i.e., the exponential Euler method). 

We solve NS equations in the two-dimensional torus $[0,1]\times[0,1]$ under the periodic boundary condition by the proposed method, with initial value 
$$u^0=(u_{1}^0(x,y),u_{2}^0(x,y)) , $$ 
where  
\begin{align*}
u_{1}^0(x,y)&=m \pi \sin^{m}(\pi x)\sin^{m - 1}(\pi y)\cos(\pi y),\\
u_{2}^0(x,y)&=-m\pi \sin^{m}(\pi x)\cos(\pi x)\sin^{m -1}(\pi y).
\end{align*}
By choosing $m = 2.6$, the initial value satisfies $u^0\in H^{2+\epsilon}(\Omega)^2$ for $0<\epsilon<0.1$. 
Therefore, the initial value satisfies the conditions in Theorem \ref{THM:FD}. 
The algorithm in \cite{Lopez-Fernandez-2010} is used to evaluate the exponential operators in the low-regularity integrator and exponential Euler method. 

We present the time discretization errors $\|u_N^{(\tau)} - u_N^{(\tau/2)}\|_{L^2(\Omega)}$ of the numerical solutions at time $T = 1/8$ in Tables \ref{table_time_errors_M2_mu05_T1}--\ref{table_time_errors_M2_mu00001_T1} for several different $\mu$ (from $\mu=0.5$ to $\mu=10^{-4}$). The rate of convergence is computed based on the errors from the finest two mesh sizes, and $u_{\tau,h}$ denotes the numerical solution using stepsize $\tau$ and mesh size $h$. 
We have presented the time discretization errors using several different spatial mesh sizes to show that the spatial discretization errors is indeed negligibly small in observing the convergence rates in time. 
From Tables \ref{table_time_errors_M2_mu05_T1}--\ref{table_time_errors_M2_mu00001_T1} we see that the proposed low regularity integrator is more accurate than the semi-implicit Euler method when $\mu=O(1)$, and is equally accurate as the classical semi-implicit Euler method when $\mu\rightarrow 0$; 
at the same time, the proposed low regularity integrator is equally accurate as the classical exponential Euler method when $\mu=O(1)$, but is more accurate and robust than the classical exponential Euler method when $\mu\rightarrow 0$. 

We present the spatial discretization errors $\|u_N^{(h)} - u_N^{(h/2)}\|_{L^2(\Omega)}$ of the numerical solutions at $T = 1/8$ in Tables \ref{table_space_errors_M2_mu05_T1}--\ref{table_space_errors_M2_mu00001_T1}.  We have presented the spatial discretization errors using several different time stepsizes to show that the time discretization errors are indeed negligibly small in observing the convergence rates in space. 
From Tables \ref{table_space_errors_M2_mu05_T1}--\ref{table_space_errors_M2_mu00001_T1} we see that the proposed low regularity integrator has similar spatial discretization errors as the semi-implicit Euler method, but is more stable than the classical exponential Euler method when $\mu\rightarrow 0$, especially when the mesh size $h$ is small. 

Overall, the numerical results show that the proposed method is more accurate than the semi-implicit Euler method in the viscous case $\mu=O(1)$, and more robust than the classical exponential Euler method in the inviscid case $\mu\rightarrow 0$. Therefore, the proposed method combines the advantages of the semi-implicit Euler method and classical exponential integrator in both viscous and inviscid cases.

\begin{table}[h]\centering\small
\renewcommand\arraystretch{1.3}
\caption{Time discretization errors with $\mu = 0.5$ and $\tau = T/N$. }
\setlength{\tabcolsep}{3.1mm}{
\begin{tabular}{|c|l|cccc|c|}
\hline
$h$&  $N$&          $32$&        $64$&       $128$&       $256$&            Rate\\ 
\hline
\multirow{3}{*}{$1/16$}    
&  Exponential LRI&  3.4699e-06&  1.9484e-06&  1.0304e-06&  5.3118e-07&  $\approx$ 0.956\\
&        Semi-implicit Euler&  5.9899e-03&  2.9903e-03&  1.4939e-03&  7.4659e-04&  $\approx$ 1.001\\
& Exponential Euler&  8.5054e-06&  3.9837e-06&  2.2576e-06&  1.1120e-06&  $\approx$ 1.022\\
\hline
\multirow{3}{*}{$1/32$}    
&  Exponential LRI&  3.9203e-06&  2.1862e-06&  1.1592e-06&  5.9401e-07&  $\approx$ 0.965\\
&        Semi-implicit Euler&  6.0269e-03&  3.0090e-03&  1.5032e-03&  7.5129e-04&  $\approx$ 1.001\\
& Exponential Euler&  9.1872e-06&  4.3045e-06&  2.0812e-06&  1.0230e-06&  $\approx$ 1.025\\
\hline
\multirow{3}{*}{$1/64$}    
&  Exponential LRI&  4.0131e-06&  2.2432e-06&  1.1768e-06&  6.1235e-07&  $\approx$ 0.942\\
&        Semi-implicit Euler&  6.0357e-03&  3.0134e-03&  1.5055e-03&  7.5241e-04&  $\approx$ 1.001\\
& Exponential Euler&  9.3321e-06&  4.3740e-06&  2.1152e-06&  1.0399e-06&  $\approx$ 1.024\\
\hline
\end{tabular}
}
\label{table_time_errors_M2_mu05_T1}
\vspace{20pt}
%
\renewcommand\arraystretch{1.3}
\caption{Time discretization errors with $\mu = 0.1$ and $\tau = T/N$. }
\setlength{\tabcolsep}{3.1mm}{
\begin{tabular}{|c|l|cccc|c|}
\hline
$h$&  $N$&          $32$&        $64$&       $128$&       $256$&            Rate\\ 
\hline
\multirow{3}{*}{$1/16$}    
&  Exponential LRI&  3.1633e-04&  1.6263e-04&  8.2497e-05&  4.2000e-05&  $\approx$ 0.974\\
&        Semi-implicit Euler&  4.0897e-03&  2.0637e-03&  1.0366e-03&  5.1949e-04&  $\approx$ 0.997\\
& Exponential Euler&  5.0361e-04&  2.4379e-04&  1.1998e-04&  5.9526e-05&  $\approx$ 1.011\\
\hline
\multirow{3}{*}{$1/32$}    
&  Exponential LRI&  3.4307e-04&  1.7735e-04&  9.0155e-05&  4.3585e-05&  $\approx$ 1.049\\
&        Semi-implicit Euler&  4.0564e-03&  2.0470e-03&  1.0282e-03&  5.1530e-04&  $\approx$ 0.997\\
& Exponential Euler&  5.3366e-04&  2.5815e-04&  1.2701e-04&  6.3004e-05&  $\approx$ 1.012\\
\hline
\multirow{3}{*}{$1/64$}    
&  Exponential LRI&  3.4873e-04&  1.8048e-04&  9.1853e-05&  4.6309e-05&  $\approx$ 0.988\\
&        Semi-implicit Euler&  4.0477e-03&  2.0426e-03&  1.0261e-03&  5.1421e-04&  $\approx$ 0.997\\
& Exponential Euler&  5.4062e-04&  2.6148e-04&  1.2864e-04&  6.3811e-05&  $\approx$ 1.012\\
\hline
\end{tabular}
}
\label{table_time_errors_M2_mu01_T1}
%
\vspace{20pt}
%
\renewcommand\arraystretch{1.3}
\caption{Time discretization errors with $\mu = 0.01$ and $\tau = T/N$. }
\setlength{\tabcolsep}{3.1mm}{
\begin{tabular}{|c|l|cccc|c|}
\hline
$h$&  $N$&          $32$&        $64$&       $128$&       $256$&            Rate\\ 
\hline
\multirow{3}{*}{$1/16$}    
&  Exponential LRI&  3.2622e-03&  1.7456e-03&  9.0590e-04&  4.6192e-04&  $\approx$ 0.972\\
&        Semi-implicit Euler&  3.3815e-03&  1.8068e-03&  9.3673e-04&  4.7736e-04&  $\approx$ 0.973\\
& Exponential Euler&  3.0102e-03&  1.4021e-03&  6.8022e-04&  3.3534e-04&  $\approx$ 1.020\\
\hline
\multirow{3}{*}{$1/32$}    
&  Exponential LRI&  3.3153e-03&  1.7804e-03&  9.2140e-04&  4.7282e-04&  $\approx$ 0.963\\
&        Semi-implicit Euler&  3.4378e-03&  1.8411e-03&  9.5609e-04&  4.8771e-04&  $\approx$ 0.971\\
& Exponential Euler&  3.0755e-03&  1.4295e-03&  6.9242e-04&  3.4106e-04&  $\approx$ 1.022\\
\hline
\multirow{3}{*}{$1/64$}    
&  Exponential LRI&  3.3371e-03&  1.7930e-03&  9.3298e-04&  4.7647e-04&  $\approx$ 0.970\\
&        Semi-implicit Euler&  3.4582e-03&  1.8530e-03&  9.6258e-04&  4.9111e-04&  $\approx$ 0.971\\
& Exponential Euler&  3.0907e-03&  1.4363e-03&  6.9550e-04&  3.4253e-04&  $\approx$ 1.022\\
\hline
\end{tabular}
}
\label{table_time_errors_M2_mu001_T1}
\end{table}
%
%
\begin{table}[h]\centering\small
\renewcommand\arraystretch{1.3}
\caption{Time discretization errors with $\mu = 0.001$ and $\tau = T/N$. }
\setlength{\tabcolsep}{3.1mm}{
\begin{tabular}{|c|l|cccc|c|}
\hline
$h$&  $N$&          $32$&        $64$&       $128$&       $256$&            Rate\\ 
\hline
\multirow{3}{*}{$1/16$}    
&   Exponential LRI&  4.5100e-03&  2.4714e-03&  1.3045e-03&  6.7217e-04&  $\approx$ 0.957\\
&        Semi-implicit Euler&  4.5124e-03&  2.4725e-03&  1.3050e-03&  6.7249e-04&  $\approx$ 0.957\\
& Exponential Euler&  5.7820e-03&  2.2694e-03&  1.0388e-03&  5.0056e-04&  $\approx$ 1.053\\
\hline
\multirow{3}{*}{$1/32$}    
&  Exponential LRI&  4.4359e-03&  2.4191e-03&  1.2711e-03&  6.4915e-04&  $\approx$ 0.969\\
&        Semi-implicit Euler&  4.4414e-03&  2.4213e-03&  1.2720e-03&  6.5340e-04&  $\approx$ 0.961\\
& Exponential Euler&  8.9174e-02&  3.7611e-03&  9.9444e-04&  4.6723e-04&  $\approx$ 1.090\\
\hline
\multirow{3}{*}{$1/64$}    
&  Exponential LRI&  4.4536e-03&  2.4283e-03&  1.2754e-03&  6.5472e-04&  $\approx$ 0.962\\
&        Semi-implicit Euler&  4.4595e-03&  2.4308e-03&  1.2764e-03&  6.5523e-04&  $\approx$ 0.962\\
& Exponential Euler&         NAN&  4.7520e+06&  2.3803e-03&  4.6110e-04&  $\approx$ 2.368\\
\hline
\end{tabular}
}
\label{table_time_errors_M2_mu0001_T1}
%
\vspace{20pt}
%
\renewcommand\arraystretch{1.3}
\caption{Time discretization errors with $\mu = 0.0001$ and $\tau = T/N$. }
\setlength{\tabcolsep}{3.1mm}{
\begin{tabular}{|c|l|cccc|c|}
\hline
$h$&  $N$&          $32$&        $64$&       $128$&       $256$&            Rate\\ 
\hline
\multirow{3}{*}{$1/16$}    
&  Exponential LRI&  4.7418e-03&  2.6184e-03&  1.3904e-03&  7.1922e-04&  $\approx$ 0.951\\
&        Semi-implicit Euler&  4.7418e-03&  2.6185e-03&  1.3905e-03&  7.1930e-04&  $\approx$ 0.951\\
& Exponential Euler&  6.8225e-03&  2.5633e-03&  1.1510e-03&  5.5009e-04&  $\approx$ 1.065\\
\hline
\multirow{3}{*}{$1/32$}    
&  Exponential LRI&  4.6045e-03&  2.5219e-03&  1.3372e-03&  6.8424e-04&  $\approx$ 0.967\\
&        Semi-implicit Euler&  4.6049e-03&  2.5221e-03&  1.3318e-03&  6.8772e-04&  $\approx$ 0.954\\
& Exponential Euler&  2.4049e-01&  9.8285e-03&  1.3797e-03&  5.4065e-04&  $\approx$ 1.352\\
\hline
\multirow{3}{*}{$1/64$}    
&  Exponential LRI&  4.6126e-03&  2.5202e-03&  1.3256e-03&  6.8127e-04&  $\approx$ 0.960\\
&        Semi-implicit Euler&  4.6129e-03&  2.5204e-03&  1.3256e-03&  6.8131e-04&  $\approx$ 0.960\\
& Exponential Euler&         NAN&         NAN&  2.4302e-01&  1.8788e+00&  $\approx$ -2.951\\
\hline
\end{tabular}
}
\label{table_time_errors_M2_mu00001_T1}

\vspace{20pt}
\renewcommand\arraystretch{1.3}
\caption{Spatial discretization errors with $\mu = 0.5$. }
\begin{tabular}{|c|l|cccc|c|}
\hline
$\tau$&  $h$&      $1/8$&      $1/16$&      $1/32$&      $1/64$&            Rate\\ 
\hline
\multirow{3}{*}{$1/128$}    
&  Exponential LRI&  9.5008e-03&  2.3037e-03&  5.7092e-04&  1.5248e-04&  $\approx$ 1.905\\
&        Semi-implicit Euler&  1.0267e-02&  2.4771e-03&  6.1243e-04&  1.6382e-04&  $\approx$ 1.902\\
& Exponential Euler&  9.5102e-03&  2.3038e-03&  5.7020e-04&  1.5239e-04&  $\approx$ 1.904\\
\hline
\multirow{3}{*}{$1/256$}    
&  Exponential LRI&  9.5040e-03&  2.3049e-03&  5.7124e-04&  1.5259e-04&  $\approx$ 1.904\\
&        Semi-implicit Euler&  9.8841e-03&  2.3913e-03&  5.9192e-04&  1.5825e-04&  $\approx$ 1.903\\
& Exponential Euler&  9.5023e-03&  2.3029e-03&  5.7005e-04&  1.5235e-04&  $\approx$ 1.904\\
\hline
\multirow{3}{*}{$1/512$}    
&  Exponential LRI&  9.5040e-03&  2.3049e-03&  5.7124e-04&  1.5259e-04&  $\approx$ 1.904\\
&        Semi-implicit Euler&  9.6941e-03&  2.3484e-03&  5.8162e-04&  1.5544e-04&  $\approx$ 1.904\\
& Exponential Euler&  9.5012e-03&  2.3025e-03&  5.6998e-04&  1.5233e-04&  $\approx$ 1.904\\
\hline
\end{tabular}
\label{table_space_errors_M2_mu05_T1}
\end{table}

\begin{table}[h]\centering\small
%
\renewcommand\arraystretch{1.3}
\caption{Spatial discretization errors with $\mu = 0.01$. }
\setlength{\tabcolsep}{3.1mm}{
\begin{tabular}{|c|l|cccc|c|}
\hline
$\tau$&  $h$&      $1/8$&      $1/16$&      $1/32$&      $1/64$&            Rate\\ 
\hline
\multirow{3}{*}{$1/128$}    
&  Exponential LRI&  9.6583e-02&  1.5251e-02&  3.5192e-03&  8.8885e-04&  $\approx$ 1.985\\
&        Semi-implicit Euler&  8.9834e-02&  1.5103e-02&  3.4588e-03&  8.7148e-04&  $\approx$ 1.989\\
& Exponential Euler&  1.0360e-01&  1.6324e-02&  5.9873e-03&  2.9067e-03&  $\approx$ 1.043\\
\hline
\multirow{3}{*}{$1/256$}    
&  Exponential LRI&  9.7182e-02&  1.5293e-02&  3.5197e-03&  8.8697e-04&  $\approx$ 1.989\\
&        Semi-implicit Euler&  9.0903e-02&  1.5185e-02&  3.4737e-03&  8.7495e-04&  $\approx$ 1.989\\
& Exponential Euler&  1.0118e-01&  1.5469e-02&  3.5075e-03&  8.8679e-04&  $\approx$ 1.984\\
\hline
\multirow{3}{*}{$1/512$}    
&  Exponential LRI&  9.7624e-02&  1.5328e-02&  3.5237e-03&  8.8710e-04&  $\approx$ 1.990\\
&        Semi-implicit Euler&  9.1564e-02&  1.5240e-02&  3.4831e-03&  8.7711e-04&  $\approx$ 1.990\\
& Exponential Euler&  1.0028e-01&  1.5363e-02&  3.4891e-03&  8.7811e-04&  $\approx$ 1.990\\
\hline
\end{tabular}
}
\label{table_space_errors_M2_mu001_T1}

\vspace{20pt}
\caption{Spatial discretization errors with $\mu = 0.001$. }
\setlength{\tabcolsep}{3.1mm}{
\begin{tabular}{|c|l|cccc|c|}
\hline
$\tau$&  $h$&      $1/8$&      $1/16$&      $1/32$&      $1/64$&            Rate\\ 
\hline
\multirow{3}{*}{$1/128$}    
&  Exponential LRI&  1.5375e-01&  2.0932e-02&  4.3894e-03&  1.0286e-03&  $\approx$    2.093\\
&        Semi-implicit Euler&  1.4026e-01&  2.0292e-02&  4.2709e-03&  1.0018e-03&  $\approx$    2.092\\
& Exponential Euler&  1.8035e-01&  8.3996e-01&  5.0764e+08&  1.4240e+45&  $\approx$ -121.1\\
\hline
\multirow{3}{*}{$1/256$}    
&  Exponential LRI&  1.5710e-01&  2.1212e-02&  4.3977e-03&  1.0289e-03&  $\approx$ 2.096\\
&        Semi-implicit Euler&  1.4341e-01&  2.0587e-02&  4.2987e-03&  1.0072e-03&  $\approx$ 2.094\\
& Exponential Euler&  1.7388e-01&  8.9803e-02&         NAN&         NAN&    NAN\\
\hline
\multirow{3}{*}{$1/512$}    
&  Exponential LRI&  1.5913e-01&  2.1439e-02&  4.4077e-03&  1.0301e-03&  $\approx$ 2.097\\
&        Semi-implicit Euler&  1.4530e-01&  2.0806e-02&  4.3173e-03&  1.0108e-03&  $\approx$ 2.095\\
& Exponential Euler&  1.7192e-01&  2.3079e-02&  4.7352e+06&         NAN&  NAN  \\
\hline
\end{tabular}
}
\label{table_space_errors_M2_mu0001_T1}
%

\vspace{20pt}

\renewcommand\arraystretch{1.3}
\caption{Spatial discretization errors with $\mu = 0.0001$. }
\setlength{\tabcolsep}{3.1mm}{
\begin{tabular}{|c|l|cccc|c|}
\hline
$\tau$&  $h$&      $1/8$&      $1/16$&      $1/32$&      $1/64$&            Rate\\ 
\hline
\multirow{3}{*}{$1/128$}    
&  Exponential LRI&  1.6435e-01&  2.3918e-02&  5.7058e-03&  1.2805e-03&  $\approx$ 2.156\\
&        Semi-implicit Euler&  1.5036e-01&  2.3149e-02&  5.4845e-03&  1.2467e-03&  $\approx$ 2.137\\
& Exponential Euler&  1.9533e-01&  1.8706e+00&  4.9113e+29&         NAN&  $\approx$   NAN\\
\hline
\multirow{3}{*}{$1/256$}    
&  Exponential LRI&  1.6829e-01&  2.4376e-02&  5.7896e-03&  1.2914e-03&  $\approx$ 2.165\\
&        Semi-implicit Euler&  1.5378e-01&  2.3539e-02&  5.5662e-03&  1.2611e-03&  $\approx$ 2.142\\
& Exponential Euler&  1.8786e-01&  2.3386e-01&         NAN&        NAN&  NAN\\
\hline
\multirow{3}{*}{$1/512$}    
&  Exponential LRI&  1.7064e-01&  2.4733e-02&  5.8552e-03&  1.2989e-03&  $\approx$ 2.172\\
&        Semi-implicit Euler&  1.5583e-01&  2.3834e-02&  5.6235e-03&  1.2703e-03&  $\approx$ 2.146\\
& Exponential Euler&  1.8574e-01&  2.8885e-02&         NAN&         NAN&  NAN\\
\hline
\end{tabular}
}
\label{table_space_errors_M2_mu00001_T1}
\end{table}

\clearpage
\newpage

\section{Conclusions}\label{sec:conclusion}
We have proposed a semi-implicit fully discrete low-regularity integrator for NS equations under both periodic and Dirichlet boundary condition. This is for the first time a low-regularity integrator is coupled with a finite element method in space. The proposed method can be shown to have first-order convergence under weaker regularity conditions than the semi-implicit Euler method and classical exponential integrators. 
Under periodic boundary conditions, the numerical results show that the proposed method combines the advantages of the semi-implicit Euler method and classical exponential integrator in both viscous and inviscid cases. In particular, the proposed method is more accurate than the semi-implicit Euler method in the viscous case $\mu=O(1)$, and more robust than the classical exponential Euler method in the inviscid case $\mu\rightarrow 0$. 

Although the proposed low-regularity integrator has successfully weakened the regularity condition for first-order convergence and has improved the accuracy of classical methods in both viscous and inviscid cases under the periodic boundary conditions, 
the current version of low-regularity integrator still cannot resolve the boundary layer effect under the Dirichlet boundary condition in the inviscid case $\mu\rightarrow 0$. 
The construction of a low-regularity integrator which can resolve the boundary layer effect under the Dirichlet boundary condition in the inviscid case $\mu\rightarrow 0$ is an interesting and challenging task.

\subsection*{Acknowledgements}

{\small
K. Schratz has received funding from the European Research Council (ERC) under the European Union’s Horizon 2020 research and innovation programme (grant agreement No. 850941). 
The work of B. Li and S. Ma are supported in part by the internal grants ZZKQ and ZZKK of The Hong Kong Polytechnic University. 
}

\bibliographystyle{abbrv}
\bibliography{NS}

\end{document}